\theoremstyle{plain}
\newtheorem{theorem}{Theorem}[section]
\newtheorem{corollary}[theorem]{Corollary}
\newtheorem{lemma}[theorem]{Lemma}
\theoremstyle{definition}
\newtheorem{criterion}[theorem]{Criterion}
\newtheorem{remark}[theorem]{Remark}
\newtheorem{definition}[theorem]{Definition}
\newtheorem*{definition-no}{Definition}
\begin{document}
\title[Nonclassifiability of UHF $L^p$-operator algebras]{Nonclassifiability
of UHF\\
$L^p$-operator algebras}
\author[Eusebio Gardella]{Eusebio Gardella}
\address{Eusebio Gardella\\
Department of Mathematics\\
Deady Hall, University of Oregon\\
Eugene OR 97403-1222, USA\\
and Fields Institute for Research in Mathematical Sciences\\
222 College Street\\
Toronto ON M5T 3J1, Canada.}
\email{gardella@uoregon.edu}
\urladdr{http://pages.uoregon.edu/gardella/}
\author{Martino Lupini}
\address{Martino Lupini\\
Department of Mathematics and Statistics\\
N520 Ross, 4700 Keele Street\\
Toronto Ontario M3J 1P3, Canada, and Fields Institute for Research in
Mathematical Sciences\\
222 College Street\\
Toronto ON M5T 3J1, Canada.}
\email{mlupini@mathstat.yorku.ca}
\urladdr{http://www.lupini.org/}
\thanks{Eusebio Gardella was partially supported by the US National Science
Foundation under Grant DMS-1101742. Martino Lupini was supported by the York
University Susan Mann Dissertation Scholarship. This work was initiated
while the authors were at the Banff International Research Station in
occasion of the workshop ``Dynamics and C*-algebras: Amenability and
Soficity''. The hospitality of the BIRS center is gratefully acknowledged.}
\dedicatory{}
\subjclass[2000]{Primary 47L10, 03E15; Secondary 47L30}
\keywords{$L^p$-operator algebra, nonselfadjoint operator algebra, UHF algebra, Borel complexity, turbulence}

\begin{abstract}
We prove that simple, separable, monotracial UHF $L^{p}$-operator algebras
are not classifiable up to (complete) isomorphism using countable
structures, such as K-theoretic data, as invariants. The same assertion
holds even if one only considers UHF $L^{p}$-operator algebras of tensor
product type obtained from a diagonal system of similarities. For $p=2$, it
follows that separable nonselfadjoint UHF operator algebras are not
classifiable by countable structures up to (complete) isomorphism. Our
results, which answer a question of N.\ Christopher Phillips, rely on Borel
complexity theory, and particularly Hjorth's theory of turbulence.
\end{abstract}

\maketitle

\section{Introduction}

Suppose that $X$ is a standard Borel space and $\lambda $ is a Borel
probability measure on $X$. For $p\in [1,\infty)$, we denote by $L^{p}(
\lambda ) $ the Banach space of Borel-measurable complex-valued functions on 
$X$ (modulo null sets), endowed with the $L^{p}$-norm. Let $B( L^{p}(
\lambda ) ) $ denote the Banach algebra of bounded linear operators on $%
L^{p}( \lambda ) $ endowed with the operator norm. We will identify the
Banach algebra $M_{n}( B( L^{p}( \lambda ) ) ) $ of $n\times n$ matrices
with entries in $B( L^{p}( \lambda ) ) $, with the algebra $B(L^{p}( \lambda
) ^{\oplus n})$ of bounded linear operators on the $p$-direct sum $L^{p}(
\lambda ) ^{\oplus n}$ of $n$ copies of $L^{p}( \lambda )$.

A (concrete)\emph{\ }separable,\emph{\ }unital $L^{p}$\emph{-operator algebra%
}, is a separable, closed subalgebra of $B( L^{p}( \lambda ) ) $ containing
the identity operator. (Such a definition is consistent with \cite[%
Definition 1.1]{phillips_crossed_2013}, in view of \cite[Proposition 1.25]%
{phillips_crossed_2013}.) In the following, all $L^{p}$-operator algebras
will be assumed to be separable and unital. Every unital $L^{p}$-operator
algebra $A\subseteq B(L^p(\lambda))$ is in particular a $p$-operator space
in the sense of \cite[§4]{daws_p-operator_2010}, with matrix norms obtained
by identifying $M_{n}( A) $ with a subalgebra of $M_{n}( B( L^{p}( \lambda )
) ) $. Such algebras have been introduced and studied by N. Christopher
Phillips in 
\cites{phillips_analogs_2012, phillips_simplicity_2013,
phillips_isomorphism_2013}. Many important classes of C*-algebras have been
shown to have $L^{p}$-analogs, including Cuntz algebras \cite%
{phillips_analogs_2012}, UHF algebras \cite{phillips_isomorphism_2013}, AF
algebras \cite{phillips_AF_2014}, and more generally groupoid C*-algebras 
\cite{gardella_representations_2014}.

If $A$ is a unital complex algebra, then an $L^{p}$-\emph{representation} of 
$A$ on a standard Borel probability space $(X,\lambda )$ is a unital algebra
homomorphism $\rho \colon A\rightarrow B(L^{p}(\lambda ))$. The closure
inside $B(L^{p}(\lambda ))$ of $\rho (A)$ is an $L^{p}$-operator algebra,
called the $L^{p}$-\emph{operator algebra associated with} $\rho $. It can
be identified with the completion of $A$ with respect to the operator
seminorm structure $\left\Vert [a_{ij}]\right\Vert _{\rho }=\left\Vert [\rho
(a_{ij})]\right\Vert _{M_{n}(B(L^{p}(\lambda )))}$ for $[a_{ij}]\in M_{n}(A)$%
; see \cite[1.2.16]{blecher_operator_2004}. If $A$ and $B$ are $L^{p}$%
-operator algebras, a \emph{unital homomorphism }$\varphi \colon
A\rightarrow B$ is an algebra homomorphism such that $\varphi (1)=1$. The $n$%
-th \emph{amplification} $\varphi ^{(n)}\colon M_{n}(A)\rightarrow M_{n}(B)$
is defined by $[a_{ij}]\mapsto \lbrack \varphi (a_{ij})]$. A unital
homomorphism $\varphi $ is \emph{completely bounded} if every amplification $%
\varphi ^{(n)}$ is bounded and 
\begin{equation*}
\left\Vert \varphi \right\Vert _{cb}:=\sup_{n\in \mathbb{N}}\left\Vert
\varphi ^{(n)}\right\Vert
\end{equation*}%
is finite.

\begin{definition}
\label{Definition:isomorphism} Let $A$ and $B$ be unital $L^{p}$-operator
algebras.

\begin{enumerate}
\item $A$ and $B$ are said to be \emph{(completely) isomorphic}, if there is
a (completely) bounded unital isomorphism $\varphi \colon A\to B$ with
(completely) bounded inverse $\varphi^{-1}\colon B\to A$.

\item $A$ and $B$ are said to be \emph{(completely) commensurable} if there
are (completely) bounded unital homomorphisms $\varphi\colon A\to B$ and $%
\psi\colon B\to A$.
\end{enumerate}
\end{definition}

For $d\in \mathbb{N}$, we denote by $M_{d}$ the unital algebra of $d\times d$
complex matrices, with matrix units $\left\{ e_{i,j}\right\} _{1\leq i,j\leq
d}$. Let $\boldsymbol{d}=(d_{n})_{n\in \mathbb{N}}$ be a sequence in $%
\mathbb{N}$, and let $\boldsymbol{\rho }=(\rho _{n})_{n\in \mathbb{N}}$ be a
sequence of representations $\rho _{n}\colon M_{d_{n}}\rightarrow
B(L^{p}(X_{n},\lambda _{n}))$. Define $M_{\boldsymbol{d}}$ to be the
algebraic infinite tensor product $\bigotimes\limits_{n\in \mathbb{N}%
}M_{d_{n}}$. Let $X=\prod\limits_{n\in \mathbb{N}}X_{n}$ be the product
Borel space and $\lambda =\bigotimes\limits_{n\in \mathbb{N}}\lambda _{n}$
be the product measure. We naturally regard the algebraic tensor product $%
\bigotimes\limits_{n\in \mathbb{N}}B(L^{p}(\lambda _{n}))$ as a subalgebra
of $B(L^{p}(\lambda ))$. The correspondence%
\begin{align*}
M_{\boldsymbol{d}}& \rightarrow \bigotimes\limits_{n\in \mathbb{N}%
}B(L^{p}(\lambda _{n}))\subseteq B(L^{p}(\lambda )) \\
a_{1}\otimes \cdots \otimes a_{k}& \mapsto \rho _{1}(a_{1})\otimes \cdots
\otimes \rho _{k}(a_{k}),
\end{align*}%
extends to a unital homomorphism $M_{\boldsymbol{d}}\rightarrow
B(L^{p}(\lambda ))$.

\begin{definition}
The algebra $A( \boldsymbol{d},\boldsymbol{\rho }) $ as defined in \cite[%
Example 3.8]{phillips_analogs_2012}, is the $L^{p}$-operator algebra
associated with $\boldsymbol{\rho }$. A \emph{UHF }$L^{p}$\emph{-operator
algebra of tensor product type }$\boldsymbol{d}$ is an algebra of the form $%
A( \boldsymbol{d},\boldsymbol{\rho }) $ for some sequence $\boldsymbol{\rho }
$ as above; see \cite[Definition 3.9]{phillips_analogs_2012} and \cite[%
Definition 1.7]{phillips_isomorphism_2013}.
\end{definition}

A special class of UHF $L^{p}$-operator algebras of tensor product type $%
\boldsymbol{d}$ has been introduced in \cite[Section 5]%
{phillips_isomorphism_2013}. For $d\in \mathbb{N}$, denote by $c_d$ the
normalized counting measure on $d=\{ 0,1,2,\ldots ,d-1\} $, and set $%
\ell^p(d)=L^p(\{0,\ldots,d-1\},c_d)$. The \emph{(canonical) spatial
representation }$\sigma ^{d}$ of $M_{d}$ on $\ell ^{p}( d) $ is defined by
setting%
\begin{equation*}
\left( \sigma ^{d}( a) \xi \right) ( j) =\sum_{i=0,\ldots,d-1}a_{ij}\xi ( i)
\end{equation*}%
for $a\in M_d$, for $\xi\in \ell^p(d)$ and $j=0,\ldots,d-1$; see \cite[%
Definition 7.1]{phillips_analogs_2012}. Observe that the corresponding
matrix norms on $M_{d}$ are obtained by identifying $M_{d}$ with the algebra
of bounded linear operators on $\ell ^{p}( d) $.

Fix a real number $\gamma $ in $[1,+\infty )$, and an enumeration $%
(w_{d,\gamma ,k})_{k\in \mathbb{N}}$ of all diagonal $d\times d$ matrices
with entries in $[1,\gamma ]\cap \mathbb{Q}$. Let $X$ be the disjoint union
of countably many copies of $\{0,1,\ldots ,d-1\}$, and let $\lambda _{d}$ be
the Borel probability measure on $X$ that agrees with $2^{-k}c_{d}$ on the $%
k $-th copy of $\{0,1,\ldots ,d-1\}$. We naturally identify the algebraic
direct sum $\bigoplus\limits_{n\in \mathbb{N}}B(\ell ^{p}(d))$ with a
subalgebra of $B(L^{p}(\lambda _{d}))$. The map%
\begin{align*}
M_{d}& \rightarrow \bigoplus\limits_{n\in \mathbb{N}}B(\ell
^{p}(d))\subseteq B(L^{p}(\lambda _{d})) \\
x& \mapsto \left( \sigma ^{d}\left( w_{d,\gamma ,k}xw_{d,\gamma
,k}^{-1}\right) \right) _{k\in \mathbb{N}}
\end{align*}%
defines a representation $\rho ^{\gamma }\colon M_{d}\rightarrow
B(L^{p}(\lambda _{d}))$.

For a sequence $\boldsymbol{\gamma }$ in $[ 1,+\infty ) $, we will denote by 
$\boldsymbol{\rho }^{\boldsymbol{\gamma }}$ the sequence of representations $%
\rho ^{\gamma _{n}}\colon M_{d_{n}}\to B( L^{p}( \lambda_{d_n} ) ) $
described in the paragraph above. Following the terminology in \cite[Section
3 and Section 5]{phillips_isomorphism_2013}, we say that the corresponding
UHF $L^{p}$-operator algebras $A( \boldsymbol{d},\boldsymbol{\rho }^{%
\boldsymbol{\gamma }}) $ are \emph{obtained from a diagonal system of
similarities }.

\begin{definition}
If $A$ is a unital Banach algebra, a \emph{normalized trace} on $A$ is a
continuous linear functional $\tau \colon A\rightarrow \mathbb{C}$ with $%
\tau (1)=1$, satisfying $\tau (ab)=\tau (ba)$ for all $a,b\in A$. The
algebra $A$ is said to be \emph{monotracial} if $A$ has a unique normalized
trace.
\end{definition}

Recall that a Banach algebra is said to be \emph{simple} if it has no
nontrivial closed two-sided ideals.

\begin{remark}
It was shown in \cite[Theorem 3.19(3)]{phillips_simplicity_2013} that UHF $%
L^p$-operator algebras obtained from a diagonal system of similarities are
always simple and monotracial.
\end{remark}

Problem 5.15 of \cite{phillips_isomorphism_2013} asks to provide invariants
which classify, up to isomorphism, some reasonable class of UHF $L^{p}$%
-operator algebras, such as those constructed using diagonal similarities.
The following is the main result of the present paper.

\begin{theorem}
\label{Theorem:main}The simple, separable, monotracial UHF $L^{p}$-operator
algebras are not classifiable by countable structures up to any of the
following equivalence relations:

\begin{enumerate}
\item complete isomorphism;

\item isomorphism;

\item complete commensurability;

\item commensurability.
\end{enumerate}

The same conclusions hold even if one only considers UHF $L^{p}$-operator
algebras of tensor product type $\boldsymbol{d}$ obtained from a diagonal
system of similarities for a fixed sequence $\boldsymbol{d}=(d_{n})_{n\in 
\mathbb{N}}$ of positive integers such that, for every distinct $n,m\in 
\mathbb{N}$, neither $d_{n}$ divides $d_{m}$ nor $d_{m}$ divides $d_{n}$.
\end{theorem}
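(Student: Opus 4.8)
The plan is to use Hjorth's theory of turbulence together with a ``Borel reduction'' argument. The standard obstruction to classifiability by countable structures is the existence of a turbulent Polish group action whose orbit equivalence relation Borel-reduces to the equivalence relation in question. Here the natural candidate is the action that already appears in the construction: fixing a sequence $\boldsymbol{d}=(d_n)_{n\in\mathbb N}$ with the stated non-divisibility property, the parameter space is $\prod_{n\in\mathbb N}[1,+\infty)^{?}$ — more precisely, for each $n$ we record a diagonal $d_n\times d_n$ matrix with entries in $[1,\gamma_n]$, so the relevant space is a countable product of cubes $\prod_n [1,\gamma_n]^{d_n}$, or after normalizing, a product $\prod_n (0,\infty)^{d_n-1}$. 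The group $G=\prod_n (\mathbb R,+)^{d_n-1}$ (or a suitable closed subgroup, e.g.\ the $\ell^\infty$-type group $\bigoplus$ with the product topology, made Polish) acts on this space by translation, and this action is turbulent by the classical computation (the orbits are dense and meager, and every local orbit is somewhere dense — this is exactly the model turbulent action $\mathbb R^{\mathbb N}$ acting on itself, restricted appropriately). The first step, then, is to set up this parameter space $\mathcal P$ and the turbulent group action precisely, and to observe that two parameters in the same $G$-orbit give \emph{spatially isomorphic}, hence completely isometrically isomorphic, UHF $L^p$-operator algebras.

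**The main step.** The heart of the matter is the converse direction: one must show that the map sending a parameter $\vec w\in\mathcal P$ to (a code for) the algebra $A(\boldsymbol d,\boldsymbol\rho^{\boldsymbol\gamma(\vec w)})$ is a Borel reduction from the orbit equivalence relation of $G\curvearrowright\mathcal P$ to commensurability (the coarsest of the four relations, so that it suffices to treat this one case — a homomorphism between the algebras forces a relation between the parameters). Concretely: if $\varphi\colon A(\boldsymbol d,\boldsymbol\rho^{\vec w})\to A(\boldsymbol d,\boldsymbol\rho^{\vec w'})$ and $\psi$ in the reverse direction are bounded unital homomorphisms, one must extract from $\|\varphi\|,\|\psi\|$ a constraint forcing $\vec w$ and $\vec w'$ to lie in the same $G$-orbit, or at least in orbits with the same closure, fine enough that the reduction is still onto a single orbit equivalence class. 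This is where the non-divisibility hypothesis on $\boldsymbol d$ enters: it rigidifies the tensor-factor structure, so that any (completely) bounded homomorphism must, up to the ambiguity of inner perturbations, respect the tensor decomposition factor by factor. The tool here should be the supernatural-number / local-multiplicity invariants and the structure theory of homomorphisms between $A(\boldsymbol d,\boldsymbol\rho)$'s developed in \cite{phillips_isomorphism_2013} and \cite{phillips_simplicity_2013}; in particular one uses that the trace is unique (monotraciality) to pin down which matrix algebras can embed where, and that a similarity implementing a bounded homomorphism on a single matrix block $M_{d_n}$ is, up to scalars, a diagonal matrix with controlled entries, so that the norm of $\varphi$ controls the ratios of the diagonal entries of $\vec w$ against those of $\vec w'$.

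**Assembling the argument.** Granting the structural analysis, the reduction is verified as follows. One shows there is a Borel map $\vec w\mapsto A_{\vec w}$ into a standard Borel space of separable $L^p$-operator algebras (with the Borel structure coming from, e.g., a countable dense subset and the matrix seminorms $\|\cdot\|_\rho$) such that $\vec w\mathrel{E_G}\vec w' \iff A_{\vec w}\cong A_{\vec w'}\iff A_{\vec w}\text{ comm.\ }A_{\vec w'}$, with the forward implications coming from spatial isomorphism and the reverse from the rigidity step. Since $E_G$ is the orbit equivalence relation of a turbulent Polish group action, Hjorth's theorem (a turbulent orbit equivalence relation does not Borel-reduce to the isomorphism relation of any Borel class of countable structures, i.e.\ to $\cong$ on $\mathrm{Mod}(\mathcal L)$ for a countable language $\mathcal L$) immediately yields that none of the four equivalence relations on this class of algebras is classifiable by countable structures; and since the class in the ``moreover'' statement is contained in the class of all simple separable monotracial UHF $L^p$-operator algebras, the first assertion follows a fortiori. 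I expect the main obstacle to be precisely the rigidity analysis of bounded (not necessarily isometric) homomorphisms between these algebras: controlling the similarities block-by-block, handling the inner-perturbation ambiguity, and ensuring that the induced relation on parameters is exactly $E_G$ and not something coarser — this is where the non-divisibility of the $d_n$ and the fine structure theory from Phillips's papers must be used with care, and where turbulence of the action (density and local-density of orbits in $\prod_n(0,\infty)^{d_n-1}$) has to be checked against the specific group that the algebra isomorphisms see.
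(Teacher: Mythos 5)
Your overall aim (turbulence plus a reduction argument) points in the right direction, but the proposal hinges on a step that is both unproved and, as far as anyone knows, unavailable: the ``rigidity'' claim that a pair of bounded unital homomorphisms between $A_{\vec w}$ and $A_{\vec w'}$ forces the parameters into the same orbit, i.e.\ the full equivalence $\vec w\,E_G\,\vec w'\iff A_{\vec w}$ commensurable with $A_{\vec w'}$. You flag this as ``the main obstacle'' and then write ``granting the structural analysis\ldots'', but that analysis is the entire content of the problem; proving such an exact characterization would essentially answer Phillips's classification question (Problem 5.15 of \cite{phillips_isomorphism_2013}), which is open. The paper never proves a Borel reduction from a turbulent action. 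It uses a strictly weaker criterion (Criterion \ref{Criterion:nonclassification}, derived from turbulence of $\ell^1\curvearrowright\mathbb R^{\mathbb N}$): one needs a Borel map $f\colon[0,+\infty)^{\mathbb N}\to X$ sending $\ell^1$-translates to $E$-equivalent points and such that the preimage of every $E$-class is \emph{meager}. The first condition is Corollary \ref{Corollary:isomorphic} (if $\sum_n|\log\gamma_n-\log\gamma_n'|<\infty$ the algebras are completely isomorphic, via an explicit estimate on the matrix norms). The second condition requires only a \emph{generic} non-commensurability statement: for fixed $\boldsymbol t$, the set of $\boldsymbol t'$ with $\exp(t_n')>R(n,d_n)\exp(t_n)$ for infinitely many $n$ is comeager, and for such $\boldsymbol t'$ there is no continuous unital homomorphism $A^{\exp(\boldsymbol t)}\to A^{\exp(\boldsymbol t')}$. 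No ``if and only if'' is needed, and none is proved.

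The concrete mechanism for that non-commensurability is also absent from your sketch. It is not a block-by-block analysis of similarities; it is Phillips's perturbation lemma (Lemma \ref{Lemma:perturbation}): if $\gamma_n'\geq R(L,d_n)\gamma_n$ and $\varphi\colon A^{\boldsymbol\gamma}\to A^{\boldsymbol\gamma'}$ is unital with $\|\varphi\|\leq L\leq n$, then restricting to the $n$-th tensor factor and perturbing yields a bounded unital homomorphism $M_{d_n}\to\bigotimes^p_{m\neq n}M_{d_m}^{\gamma_m'}$; this forces the class of the unit in $K_0$ of the codomain to be divisible by $d_n$, which the non-divisibility hypothesis on $\boldsymbol d$ rules out. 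That is the only place the hypothesis on $\boldsymbol d$ is used, and your appeal to monotraciality and ``diagonal similarities with controlled entries'' does not substitute for it. Finally, your turbulence setup is garbled: $\prod_n(\mathbb R,+)^{d_n-1}$ acting on itself by translation is transitive (hence smooth, not turbulent), and a closed subgroup will not help; the model turbulent action is that of a dense, Polishable, non-closed subgroup such as $\ell^1$ on $\mathbb R^{\mathbb N}$, which is exactly what the paper's criterion encodes.
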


It follows from Theorem \ref{Theorem:main} that simple, separable,
monotracial UHF $L^{p}$-operator algebras are not classifiable by
K-theoretic data, even after adding to the K-theory a countable collection
of invariants consisting of countable structures. When $p=2$, Theorem \ref%
{Theorem:main} asserts that separable nonselfadjoint UHF operator algebras
are not classifiable by countable structures up to isomorphism. This
conclusion is in stark constrast with Glimm's classification of UHF
C*-algebras by their corresponding supernatural number \cite%
{glimm_certain_1960}. (Observe that, in view of Glimm's classification,
Banach-algebraic isomorphism and $\ast $-isomorphism coincide for UHF
C*-algebras.)

\section{Borel complexity theory}

In order to obtain our main result, we will work in the framework of Borel
complexity theory. In such a framework, a classification problem is regarded
as an equivalence relation $E$ on a standard Borel space $X$. If $F$ is
another equivalence relation on another standard Borel space $Y$, a \emph{%
Borel reduction} from $E$ to $F$ is a Borel function $g\colon X\rightarrow Y$
with the property that%
\begin{equation*}
xEx^{\prime }\quad \text{if and only if}\quad g(x)Fg(x^{\prime })\text{.}
\end{equation*}

The map $g$ can be seen as a classifying map for the objects of $X$ up to $E$%
. The requirement that $g$ is Borel captures the fact that $g$ is \emph{%
explicit }and \emph{constructible} (and not, for example, obtained by using
the Axiom of Choice). The relation $E$ is \emph{Borel reducible }to $F$ if
there is a Borel reduction from $E$ to $F$. This can be interpreted as
asserting that is it possible to explicitly classify the elements of $X$ up
to $E$ using $F$-classes as invariants.

The notion of Borel reducibility provides a way to compare the complexity of
classification problems in mathematics. Some distinguished equivalence
relations are then used as benchmarks of complexity. The first such
benchmark is the relation $=_{\mathbb{R}}$ of equality of real numbers. (One
can replace $\mathbb{R}$ with any other Polish space.) An equivalence
relation is called \emph{smooth }if it is Borel reducible to $=_{\mathbb{R}}$%
. Equivalently, an equivalence relation is smooth if its classes can be
explicitly parametrized by the points of a Polish space. For instance, the
above mentioned classification of UHF C*-algebras due to Glimm \cite%
{glimm_certain_1960} shows that the classification problem of UHF\
C*-algebras is smooth. Smoothness is a very restrictive notion, and many
natural classification problems transcend such a benchmark. For instance,
the relation of isomorphism of rank $1$ torsion-free abelian groups is not
smooth; see \cite{hjorth_measuring_2002}.

A more generous notion of classifiability is being \emph{classifiable by
countable structures}. Informally speaking, an equivalence relation $E$ on a
standard Borel space $X$ is classifiable by countable structures if it is
possible to explicitly assign to the elements of $X$ complete invariants up
to $E$ that are countable structures, such as as countable (ordered) groups,
countable (ordered) rings, etcetera. To formulate precisely this definition,
let $\mathcal{L}$ be a countable first order language \cite[Definition 1.1.1]%
{marker_model_2002}. The class $\mathrm{Mod}(\mathcal{L})$ of $\mathcal{L}$%
-structures supported by the set $\mathbb{N}$ of natural numbers can be
regarded as a Borel subset of $\prod\limits_{n\in \mathbb{N}}2^{\mathbb{N}%
^{n}}$. As such, $\mathrm{Mod}(\mathcal{L})$ inherits a Borel structure
making it a standard Borel space. Let $\cong _{\mathcal{L}}$ be the relation
of isomorphism of elements of $\mathrm{Mod}(\mathcal{L})$.

\begin{definition}
An equivalence relation $E$ on a standard Borel space is said to be \emph{%
classifiable by countable structures}, if there exists a countable first
order language $\mathcal{L}$ such that $E$ is Borel reducible to $\cong _{%
\mathcal{L}}$.
\end{definition}

The Elliott-Bratteli classification of AF C*-algebras %
\cites{elliott_classification_1976,bratteli_inductive_1972} shows, in
particular, that AF C*-algebras are classifiable by countable structures up
to $\ast $-isomorphism. Any smooth equivalence relation is in particular
classifiable by countable structures.

Many naturally occurring classification problems in mathematics, and
particularly in functional analysis and operator algebras, have recently
been shown to transcend countable structures. This has been obtained for the
relation of unitary conjugacy of irreducible representations and
automorphisms of non type I C*-algebras 
\cites{hjorth_non-smooth_1997,
kerr_turbulence_2010, farah_dichotomy_2012, lupini_unitary_2014}, conjugacy
of ergodic measure-preserving transformations of the Lebesgue measure space~%
\cite{foreman_anti-classification_2004}, conjugacy of automorphisms of $%
\mathcal{Z}$-stable C*-algebras and McDuff II$_{1}$ factors \cite%
{kerr_borel_2014}, unitary conjugacy of unitary and self-adjoint operators 
\cite{kechris_strong_2001}, and isomorphism of von Neumann factors %
\cites{sasyk_classification_2009, sasyk_turbulence_2010}. The main tool
involved in these results is the theory of turbulence developed by Hjorth in 
\cite{hjorth_classification_2000}.

Suppose that $G\curvearrowright X$ is a continuous action of a Polish group $%
G$ on a Polish space $X$. The corresponding orbit equivalence relation $%
E_{G}^{X}$ is the relation on $X$ obtained by setting $xE_{G}^{X}x^{\prime }$
if and only if $x$ and $x^{\prime }$ belong to the same orbit. Hjorth's
theory of turbulence provides a dynamical condition, called \emph{(generic)
turbulence}, that ensures that a Polish group action $G\curvearrowright X$
yields an orbit equivalence relation $E_{G}^{X}$ that is not classifiable by
countable structures. This provides, directly or indirectly, useful criteria
to prove that a given equivalence relation is not classifiable by countable
structures. A prototypical example of turbulent group action is the action
of $\ell ^{1}$ on $\mathbb{R}^{\mathbb{N}}$ by translation. A standard
argument allows one to deduce the following nonclassification criterion from
turbulence of the action $\ell ^{1}\curvearrowright \mathbb{R}^{\mathbb{N}}$
and Hjorth's turbulence theorem \cite[Theorem 3.18]%
{hjorth_classification_2000}; see for example \cite[Lemma 3.2 and Criterion
3.3]{lupini_unitary_2014}.

Recall that a subspace of a topological space is \emph{meager} if it is
contained in the union of countably many closed nowhere dense sets.

\begin{criterion}
\label{Criterion:nonclassification}Suppose that $E$ is an equivalence
relation on a standard Borel space $X$. If there is a Borel map $f\colon [
0,+\infty ) ^{\mathbb{N}} \to X$ such that

\begin{enumerate}
\item $f( \boldsymbol{t}) Ef( \boldsymbol{t}^{\prime }) $ whenever $%
\boldsymbol{t},\boldsymbol{t}^{\prime }\in [ 0,+\infty ) ^{\mathbb{N}}$
satisfy $\boldsymbol{t}-\boldsymbol{t}^{\prime }\in \ell ^{1}$, and

\item the preimage under $f$ of any $E$-class is meager,
\end{enumerate}

then $E$ is not classifiable by countable structures.
\end{criterion}

We will apply such a criterion to establish our main result.

\section{Nonclassification}

Fix a sequence $\boldsymbol{d}=( d_{n}) _{n\in \mathbb{N}}$ of integers such
that for every distinct $n,m \in \mathbb{N}$, neither $d_{n}$ divides $d_{m
} $ nor $d_{m }$ divides $d_{n}$. In particular, this holds if the numbers $%
d_{n}$ are pairwise coprime. The same argument works if one only assumes
that all but finitely many values of $\boldsymbol{d}$ satisfy such an
assumption. We endow $[ 1,+\infty ) ^{\mathbb{N}}$ with the product
topology, and regard it as the parametrizing space for UHF $L^{p}$-operator
algebras of type $\boldsymbol{d} $ obtained from a diagonal system of
similarities, as described in the previous section; see also \cite[Section 3
and Section 5]{phillips_isomorphism_2013}. We therefore regard (complete)
isomorphism and (complete) commensurability of UHF $L^{p}$-operator algebras
of type $\boldsymbol{d}$, obtained from a diagonal system of similarities,
as equivalence relations on $[ 1,+\infty ) ^{\mathbb{N}}$.

For $\boldsymbol{\gamma }\in \lbrack 1,+\infty )^{\mathbb{N}}$, we denote by 
$A^{\boldsymbol{\gamma }}$ the corresponding UHF $L^{p}$-operator algebra.
In the following, we will denote by $\boldsymbol{\gamma }$ and $\boldsymbol{%
\gamma }^{\prime }$ sequences $(\gamma _{n})_{n\in \mathbb{N}}$ and $(\gamma
_{n}^{\prime })_{n\in \mathbb{N}}$ in $[1,+\infty )^{\mathbb{N}}$. For $%
\gamma \in \lbrack 1,+\infty )$, we denote by $M_{d}^{\gamma }$ the $L^{p}$%
-operator algebra structure on $M_{d}$ induced by the representation $\rho
^{\gamma }$ defined in Section~1. The corresponding matrix norms on $%
M_{d}^{\gamma }$ are denoted by $\Vert \cdot \Vert _{\gamma }$. In
particular, when $\gamma =1$ one obtains the matrix norms induced by the
spatial representation $\sigma ^{d}$ of $M_{d}$. The algebra $A^{\boldsymbol{%
\gamma }}$ can be seen as the $L^{p}$-operator tensor product $%
\bigotimes\limits_{n\in \mathbb{N}}^{p}M_{d_{n}}^{\gamma _{n}}$, as defined
in \cite[Definition 1.9]{phillips_simplicity_2013}. (Note that, unlike in 
\cite{phillips_simplicity_2013}, we write the Hölder exponent $p$ as a
superscript in the notation for tensor products.)

\begin{lemma}
\label{Lemma:isomorphic} Let $\boldsymbol{\gamma },\boldsymbol{\gamma }%
^{\prime }\in \lbrack 1,+\infty )^{\mathbb{N}}$ satisfy%
\begin{equation*}
L:=\prod_{n\in \mathbb{N}}\frac{\gamma _{n}}{\gamma _{n}^{\prime }}<+\infty 
\text{.}
\end{equation*}%
Then the identity map on the algebraic tensor product $M_{\boldsymbol{d}%
}=\bigotimes\limits_{n\in \mathbb{N}}M_{d_{n}}$ extends to a completely
bounded unital homomorphism $A^{\boldsymbol{\gamma }}\rightarrow A^{%
\boldsymbol{\gamma }^{\prime }}$, with $\left\Vert \varphi \right\Vert
_{cb}\leq L$. In other words, the matrix norms $\Vert \cdot \Vert _{%
\boldsymbol{\gamma }}$ and $\Vert \cdot \Vert _{\boldsymbol{\gamma ^{\prime }%
}}$ on the algebraic tensor product $\bigotimes\limits_{n\in \mathbb{N}%
}M_{d_{n}}$ satisfy 
\begin{equation*}
\Vert \cdot \Vert _{\boldsymbol{\gamma }^{\prime }}\leq L\Vert \cdot \Vert _{%
\boldsymbol{\gamma }}.
\end{equation*}
\end{lemma}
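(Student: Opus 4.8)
The plan is to reduce the statement to a single-factor estimate and then multiply over the tensor factors. First I would record the key observation about the building blocks: for a fixed $d$ and $\gamma,\gamma'\in[1,+\infty)$ with $\gamma/\gamma'<+\infty$ (which is automatic here), I claim that the identity map on $M_d$ extends to a completely bounded unital homomorphism $M_d^{\gamma}\to M_d^{\gamma'}$ with $\|\cdot\|_{cb}\le \gamma/\gamma'$ when $\gamma\ge\gamma'$, and with norm $1$ otherwise; in general the bound is $\max\{1,\gamma/\gamma'\}$, but since all quotients appearing in $L$ are multiplied together and $L<+\infty$ is the only hypothesis, it suffices to prove $\|\cdot\|_{\gamma'}\le (\gamma/\gamma')\,\|\cdot\|_{\gamma}$ factorwise and take the product. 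The point is that the representation $\rho^{\gamma}$ conjugates the spatial representation $\sigma^d$ by the diagonal matrices $w_{d,\gamma,k}$ with entries in $[1,\gamma]\cap\mathbb Q$, so passing from the $\gamma$-norm to the $\gamma'$-norm amounts, at the level of each summand, to replacing a conjugation by $w$ (entries in $[1,\gamma]$) with a conjugation by some $w'$ (entries in $[1,\gamma']$); the discrepancy $w(w')^{-1}$ and its inverse have operator norm on $\ell^p(d)$ controlled by the ratio of the extreme diagonal entries, hence by $\gamma/\gamma'$. I would make this precise using the fact, from \cite{phillips_isomorphism_2013} and \cite{phillips_simplicity_2013}, that $\|a\|_{\gamma}=\sup_k \|\sigma^d(w_{d,\gamma,k}\,a\,w_{d,\gamma,k}^{-1})\|$ and the analogous matrix-amplified formula on $M_n(M_d)$.

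Next I would handle the matrix amplifications uniformly. Since $M_n(M_d^{\gamma})$ is by definition $B(L^p(\lambda_d)^{\oplus n})$ with the norm obtained from $\rho^{\gamma}\otimes \mathrm{id}_n$, and $\rho^{\gamma}$ is built out of spatial representations conjugated by diagonals, the same conjugation estimate applies verbatim at each matrix level: for $[a_{ij}]\in M_n(M_{\boldsymbol d})$ one has $\|[a_{ij}]\|_{\gamma'}\le (\gamma/\gamma')\|[a_{ij}]\|_{\gamma}$ with the constant independent of $n$. This is what upgrades ``bounded'' to ``completely bounded'' with the stated $cb$-norm bound, and it is the reason the Lemma is stated in complete-boundedness form.

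Then I would multiply over $n$. The algebra $A^{\boldsymbol\gamma}$ is the $L^p$-operator tensor product $\bigotimes_{n}^{p}M_{d_n}^{\gamma_n}$, and an elementary tensor $a_1\otimes\cdots\otimes a_k$ (with $a_j=1$ for $j>k$, say) has $\gamma$-matrix-norm computed as a supremum of norms of spatial operators conjugated by $w_{d_1,\gamma_1,k_1}\otimes\cdots\otimes w_{d_k,\gamma_k,k_k}$; comparing with the $\gamma'$-version and using submultiplicativity of the operator norm under tensor products of diagonals, the discrepancy factor is at most $\prod_{n=1}^{k}(\gamma_n/\gamma_n')\le \prod_{n\in\mathbb N}(\gamma_n/\gamma_n')=L<+\infty$ (here the hypothesis $L<+\infty$ and the bound $\gamma_n/\gamma_n'$, rather than $\max\{1,\gamma_n/\gamma_n'\}$, per factor is exactly what keeps the product finite — one uses that only finitely many factors exceed $1$, or absorbs them since the total product converges). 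Hence $\|x\|_{\boldsymbol\gamma'}\le L\|x\|_{\boldsymbol\gamma}$ for every $x$ in the algebraic tensor product $M_{\boldsymbol d}$, and the same at every matrix level. This inequality says precisely that the identity on $M_{\boldsymbol d}$ is $\|\cdot\|_{\boldsymbol\gamma}$-$\|\cdot\|_{\boldsymbol\gamma'}$ completely bounded with $cb$-norm $\le L$; since $M_{\boldsymbol d}$ is dense in both $A^{\boldsymbol\gamma}$ and $A^{\boldsymbol\gamma'}$, it extends to a completely bounded unital homomorphism $\varphi\colon A^{\boldsymbol\gamma}\to A^{\boldsymbol\gamma'}$ with $\|\varphi\|_{cb}\le L$, which is the assertion.

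The main obstacle I expect is bookkeeping the conjugating diagonals correctly across the infinite tensor product and at all matrix levels simultaneously, i.e.\ verifying that the relevant norm formulas for $\|\cdot\|_{\boldsymbol\gamma}$ on $M_n(M_{\boldsymbol d})$ really do reduce to suprema over tensor products of the $w_{d_n,\gamma_n,k}$'s conjugating a fixed spatial-type representation, so that a clean per-factor operator-norm estimate $\|w(w')^{-1}\|\,\|w'w^{-1}\|\le \gamma/\gamma'$ can be invoked. Once the correct formula is in hand — which is essentially the content of the construction in \cite{phillips_isomorphism_2013} together with \cite[Definition~1.9]{phillips_simplicity_2013} — the inequality is immediate from submultiplicativity and the trivial estimate on diagonal matrices with entries in $[1,\gamma]$.
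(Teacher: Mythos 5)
Your strategy is the same as the paper's: reduce to a per-factor comparison of the diagonal conjugators, obtain the single-factor estimate by writing a conjugator $w$ with entries in $[1,\gamma]$ as a product $rw'$ with $w'$ the entrywise truncation at level $\gamma'$ and $r$ the diagonal discrepancy, note that at matrix level $k$ the conjugators have the form $1_{M_k}\otimes w$ so the constant is independent of $k$ (whence complete boundedness), and multiply over the tensor factors. So there is no difference of route; the issue is that your key per-factor claim is stated in the wrong direction and, as written, is false. You assert that the identity $M_d^{\gamma}\to M_d^{\gamma'}$ has norm at most $\gamma/\gamma'$ when $\gamma\geq\gamma'$ and norm $1$ otherwise, i.e.\ $\Vert x\Vert_{\gamma'}\leq\max\{1,\gamma/\gamma'\}\Vert x\Vert_{\gamma}$. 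Take $d=2$, $\gamma=1$, $\gamma'=2$ and $x=e_{1,2}$: then $\Vert x\Vert_{\gamma}=\Vert\sigma^{2}(e_{1,2})\Vert=1$, while conjugating by $\mathrm{diag}(2,1)$ gives $\Vert x\Vert_{\gamma'}=2$, so the identity $M_d^{\gamma}\to M_d^{\gamma'}$ has norm at least $2$, not at most $\max\{1,\gamma/\gamma'\}=1$. Enlarging the parameter enlarges the set of conjugators over which one takes the supremum, so it enlarges the norm; the truncation argument therefore yields $\Vert x\Vert_{\gamma}\leq\max\{1,\gamma/\gamma'\}\Vert x\Vert_{\gamma'}$, i.e.\ it bounds the $\gamma$-norm by the $\gamma'$-norm, not the other way around.

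In fairness, the displayed inequality in the Lemma has the same direction problem: the paper's own proof ends with $\Vert x\Vert_{\boldsymbol{\gamma}}\leq(1+\varepsilon)L\Vert x\Vert_{\boldsymbol{\gamma}'}$, which is the reverse of the displayed conclusion $\Vert\cdot\Vert_{\boldsymbol{\gamma}'}\leq L\Vert\cdot\Vert_{\boldsymbol{\gamma}}$, and only the symmetric hypothesis of Corollary \ref{Corollary:isomorphic} is used afterwards, where the direction is immaterial. But in your write-up the confusion hardens into the concretely false sub-claim ``norm $1$ otherwise,'' so to repair it you must swap the roles of $\gamma$ and $\gamma'$ throughout (equivalently, replace $L$ by $\prod_{n}\max\{1,\gamma_n'/\gamma_n\}$ for the map $A^{\boldsymbol{\gamma}}\to A^{\boldsymbol{\gamma}'}$). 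Two smaller points: your parenthetical that ``only finitely many factors exceed $1$'' does not follow from $L<+\infty$, and convergence of $\prod_n L_n$ does not in general control $\prod_n\max\{1,L_n\}$ (the logarithms could converge conditionally); the clean and sufficient hypothesis is the symmetric one of Corollary \ref{Corollary:isomorphic}. Finally, when you pass from ``some $w'$'' to the actual estimate, you must specify $w'$ as the entrywise truncation $\min\{w,\gamma'\}$ of the given $w$, since for an arbitrary $w'$ the product $\Vert w(w')^{-1}\Vert\,\Vert w'w^{-1}\Vert$ is only bounded by $\gamma\gamma'$.
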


\begin{proof}
For $j\in \mathbb{N}$, let $L_{j}=\frac{\gamma _{j}}{\gamma _{j}^{\prime }}$%
. Fix $\varepsilon >0$. In order to prove our assertion, it is enough to
show that if $k\in \mathbb{N}$ and $x$ is an element of $M_{k}\left(
\bigotimes\limits_{j\in \mathbb{N}}M_{d_{j}}\right) $, then $\left\Vert
x\right\Vert _{\boldsymbol{\gamma }^{\prime }}\leq (1+\varepsilon )L\Vert
x\Vert _{\boldsymbol{\gamma }}$. Let $x\in M_{k}\left(
\bigotimes\limits_{j\in \mathbb{N}}M_{d_{j}}\right) $, and choose $n,m\in 
\mathbb{N}$ and $X_{i,j}\in M_{k}(M_{d_{i}})$ for $1\leq i\leq n$ and $1\leq
j\leq m$, satisfying 
\begin{equation*}
x=\sum_{1\leq j\leq m}X_{1,j}\otimes \cdots \otimes X_{n,j}
\end{equation*}%
By definition of the matrix norms on $A_{\boldsymbol{\gamma }}$, for $1\leq
i\leq n$ there exists a diagonal matrix $w_{i}\in M_{d_{i}}$ with entries in 
$[1,\gamma _{i}]$ such that, if $W_{i}\in M_{k}(M_{d_{i}})$ is the diagonal
matrix with entries in $M_{d_{i}}$, and nonzero entries equal to $w_{i}$ (in
other words, $W_{i}=1_{M_{k}}\otimes w_{i}$), then%
\begin{equation*}
\Vert x\Vert _{\boldsymbol{\gamma }}\leq (1+\varepsilon )\left\Vert
\sum_{1\leq j\leq m}W_{1}X_{1,j}W_{1}^{-1}\otimes \cdots \otimes
W_{n}X_{n,j}X_{n}^{-1}\right\Vert \text{.}
\end{equation*}

For $1\leq i\leq n$, we denote the diagonal entries of $w_{i}\in M_{d_{j}}$
by $a_{i,\ell }$, for $\ell =1,\ldots ,d_{i}$. We will define two other
diagonal matrices 
\begin{equation*}
w_{i}^{\prime }=\mathrm{diag}(a_{i,1}^{\prime },\ldots ,a_{i,d_{i}}^{\prime
})\ \mbox{ and }\ r_{i}=\mathrm{diag}(r_{i,1},\ldots ,r_{i,d_{i}})
\end{equation*}%
in $M_{d_{i}}$, with entries in $[1,\gamma _{i}^{\prime }]$ and $[1,L_{i}]$,
respectively, as follows. For $1\leq \ell \leq d_{i}$, we set 
\begin{equation*}
a_{i,\ell }^{\prime }=\left\{ 
\begin{array}{lll}
a_{i,\ell }, & \hbox{if $a_{i,\ell}< \gamma^{\prime}_i$;} &  \\ 
\gamma _{i}^{\prime }, & \hbox{if $a_{j,\ell}\geq \gamma^{\prime}_i$.} & 
\end{array}%
\right.
\end{equation*}%
and 
\begin{equation*}
r_{i,\ell }=\left\{ 
\begin{array}{lll}
1, & \hbox{if $a_{i,\ell}< \gamma^{\prime}_i$;} &  \\ 
\frac{1}{\gamma _{i}^{\prime }}a_{i,\ell }, & 
\hbox{if $a_{i,\ell}\geq
\gamma^{\prime}_i$.} & 
\end{array}%
\right.
\end{equation*}%
Observe that $r_{i,\ell }$ belongs to $[1,L_{i}]$ (since $a_{i,\ell }\leq
\gamma _{i}\leq L_{i}\gamma _{i}^{\prime }$), and that $a_{i,\ell }^{\prime
} $ belongs to $[1,\gamma _{i}^{\prime }]$ for all $1\leq i\leq n$ and $%
1\leq \ell \leq d_{i}$.

Define $w_{i}^{\prime }$ and $r_{i}$ to be the diagonal $d_{i}\times d_{i}$
matrices with diagonal entries $a_{i,\ell }^{\prime }$ and $r_{i,\ell }$ for 
$1\leq \ell \leq d_{i}$. Let $W_{i}^{\prime },R_{i}\in M_{k}(M_{d_{i}})$ be
the diagonal $k\times k$ matrices with entries in $M_{d_{i}}$ having
diagonal entries equal to, respectively, $w_{i}^{\prime }$ and $r_{i}$. (In
other words, $W_{i}^{\prime }=1_{M_{k}}\otimes w_{i}^{\prime }$ and $%
R_{i}=1_{M_{k}}\otimes r_{i}$.)

Then $W_{i}=R_{i}W_{i}^{\prime }$ for all $1\leq i\leq n$. Additionally, 
\begin{equation*}
\left\| R_{i}\right\| \leq L_{i} \ \mbox{ and } \ \left\| R_{i}^{-1}\right\|
\leq 1.
\end{equation*}
Therefore, 
\begin{align*}
\left\Vert x\right\Vert _{\boldsymbol{\gamma }} &\leq
(1+\varepsilon)\left\Vert \sum_{1\leq j\leq m}W_{1}X_{1,j}W_{1}^{-1}\otimes
\cdots \otimes W_{n}X_{n,j}W_{n}^{-1}\right\Vert \\
&=(1+\varepsilon)\left\Vert \sum_{1\leq j\leq m}R_{1}W_{1}^{\prime
}X_{1,j}W_{1}^{^{\prime }-1}R_{1}^{-1}\otimes \cdots \otimes
R_{n}W_{n}^{\prime }X_{n,j}W_{n}^{^{\prime }-1}R_{n}^{-1}\right\Vert \\
&\leq (1+\varepsilon)\left\Vert R_{1}\right\Vert \left\Vert R_{2}\right\Vert
\cdots \left\Vert R_{n}\right\Vert \left\Vert \sum_{1\leq j\leq
m}W_{1}^{\prime }X_{1,j}W_{1}^{\prime -1}\otimes \cdots \otimes
W_{n}^{\prime }X_{n,j}W_{n}^{\prime -1}\right\Vert \\
&\leq (1+\varepsilon)L_{1}\cdots L_{n}\left\Vert \sum_{1\leq j\leq
m}W_{1}^{\prime }X_{1,j}W_{1}^{\prime -1}\otimes \cdots \otimes
W_{n}^{\prime }X_{n,j}W_{n}^{\prime -1}\right\Vert \\
&\leq (1+\varepsilon)L\left\Vert x\right\Vert _{\boldsymbol{\gamma }^{\prime
}}\text{.}
\end{align*}
This concludes the proof.
\end{proof}

\begin{corollary}
\label{Corollary:isomorphic} If $\boldsymbol{\gamma },\boldsymbol{\gamma }%
^{\prime }\in \lbrack 1,+\infty )^{\mathbb{N}}$ satisfy%
\begin{equation*}
\prod_{n\in \mathbb{N}}\max \left\{ \frac{\gamma _{n}}{\gamma _{n}^{\prime }}%
,\frac{\gamma _{n}^{\prime }}{\gamma _{n}}\right\} <+\infty \text{,}
\end{equation*}%
then $A^{\mathbf{\gamma }}$ and $A^{\mathbf{\gamma }^{\prime }}$ are
completely isomorphic.
\end{corollary}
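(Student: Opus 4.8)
The plan is to obtain the corollary as a soft consequence of Lemma~\ref{Lemma:isomorphic}, applied in both directions, together with a density argument.

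First I would reformulate the hypothesis. For every $t>0$ one has $\log\max\{t,t^{-1}\}=|\log t|$, so the assumption $\prod_{n}\max\{\gamma_n/\gamma_n',\gamma_n'/\gamma_n\}<+\infty$ is exactly the statement that $\sum_{n}\bigl|\log(\gamma_n/\gamma_n')\bigr|<+\infty$; in particular it is symmetric in $\boldsymbol\gamma$ and $\boldsymbol\gamma'$. Since the series $\sum_n\log(\gamma_n/\gamma_n')$ then converges absolutely, the infinite products
\[
L:=\prod_{n\in\mathbb N}\frac{\gamma_n}{\gamma_n'}\qquad\text{and}\qquad L':=\prod_{n\in\mathbb N}\frac{\gamma_n'}{\gamma_n}=L^{-1}
\]
are both finite (and strictly positive; their exact values will play no role).

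Next I would invoke Lemma~\ref{Lemma:isomorphic} twice. Since $L<+\infty$, the identity map of $M_{\boldsymbol d}=\bigotimes_{n}M_{d_n}$ extends to a completely bounded unital homomorphism $\varphi\colon A^{\boldsymbol\gamma}\to A^{\boldsymbol\gamma'}$; since $L'<+\infty$, it also extends to a completely bounded unital homomorphism $\psi\colon A^{\boldsymbol\gamma'}\to A^{\boldsymbol\gamma}$. Because $A^{\boldsymbol\gamma}$ and $A^{\boldsymbol\gamma'}$ are by definition the closures of the image of $M_{\boldsymbol d}$ under the respective representations, the image of $M_{\boldsymbol d}$ is dense in each of them, and both composites $\psi\circ\varphi$ and $\varphi\circ\psi$ restrict to the identity on this dense subalgebra. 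By continuity, $\psi\circ\varphi=\mathrm{id}_{A^{\boldsymbol\gamma}}$ and $\varphi\circ\psi=\mathrm{id}_{A^{\boldsymbol\gamma'}}$. Hence $\varphi$ is a bijective completely bounded unital homomorphism whose inverse $\psi$ is also completely bounded, which is precisely the assertion that $A^{\boldsymbol\gamma}$ and $A^{\boldsymbol\gamma'}$ are completely isomorphic in the sense of Definition~\ref{Definition:isomorphism}.

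There is no real obstacle in this argument; the corollary is essentially a repackaging of Lemma~\ref{Lemma:isomorphic}. The only steps deserving a line of care are the passage from the single convergent product in the hypothesis to the two separate convergent products $L$ and $L'$ needed to apply the lemma in each direction, and the remark that the two homomorphisms delivered by the lemma are automatically mutually inverse, since each is the unique continuous extension of the identity map on the common dense subalgebra $M_{\boldsymbol d}$.
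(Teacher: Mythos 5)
Your proof is correct and is exactly the argument the paper intends: the paper states the corollary without proof as an immediate consequence of Lemma~\ref{Lemma:isomorphic}, and your write-up supplies the two standard observations needed — that the symmetric hypothesis yields finiteness of both products $\prod_n\gamma_n/\gamma_n'$ and $\prod_n\gamma_n'/\gamma_n$ (via absolute convergence of $\sum_n|\log(\gamma_n/\gamma_n')|$), and that the two completely bounded extensions of the identity on the dense subalgebra $M_{\boldsymbol d}$ are mutually inverse by continuity.
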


The following lemma can be proved in the same way as \cite[Lemma 5.11]%
{phillips_isomorphism_2013} with the extra ingredient of \cite[Lemma 5.8]%
{phillips_isomorphism_2013}. As before, we denote by $\otimes^{p}$ the $%
L^{p} $-operator tensor product; see \cite[Definition 1.9]%
{phillips_simplicity_2013}.

\begin{lemma}[Phillips]
\label{Lemma:perturbation} Let $L>0$ and let $d\in \mathbb{N}$. Then there
is a constant $R( L,d) >0$ such that the following holds. Whenever $A$ is a
unital $L^{p}$-operator algebra, whenever $\gamma ,\gamma ^{\prime }\in
\lbrack 1,+\infty )$ satisfy%
\begin{equation*}
\gamma ^{\prime }\geq R( L,d) \gamma \text{,}
\end{equation*}%
and $\varphi \colon M_{d}^{\gamma }\to M_{d}^{\gamma ^{\prime }}\otimes
^{p}A $ is a unital\emph{\ }homomorphism with $\|\varphi\| \leq L$, there
exists a unital homomorphism $\psi \colon M_{d}^{\gamma }\to A$ with $%
\|\psi\| \leq L+1$.
\end{lemma}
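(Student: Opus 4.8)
The plan is to cite Phillips's results for the heavy lifting on similarity-perturbed matrix algebras and then piece together a proof of Lemma~\ref{Lemma:perturbation} by the same argument used for \cite[Lemma 5.11]{phillips_isomorphism_2013}, inserting \cite[Lemma 5.8]{phillips_isomorphism_2013} where needed. Concretely, I would first record what \cite[Lemma 5.8]{phillips_isomorphism_2013} gives us: a quantitative statement saying that a unital homomorphism from $M_d^\gamma$ into an $L^p$-operator algebra which is ``close to being spatial'' (in the sense that it factors through a representation by similarities of bounded modulus) can be corrected, at a controlled cost to the completely bounded norm, to a genuine spatial representation of $M_d$. The point of introducing $\gamma'$ large compared to $\gamma$ is that the ratio $\gamma'/\gamma$ governs how much room one has to absorb the conjugating diagonal similarities coming from the $M_d^\gamma$-structure into the $M_d^{\gamma'}$-factor, so that after this absorption the remaining map into $A$ is spatial-up-to-norm-$L$ on the matrix units.

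The key steps, in order, would be as follows. First, unwind the $L^p$-operator tensor product structure of $M_d^{\gamma'}\otimes^p A$ using \cite[Definition 1.9]{phillips_simplicity_2013}, so that a unital homomorphism $\varphi\colon M_d^\gamma\to M_d^{\gamma'}\otimes^p A$ is presented concretely on the matrix units $e_{i,j}$. Second, use the defining property of the norm $\|\cdot\|_\gamma$ on $M_d$ — that it is computed as an infimum over conjugations by diagonal matrices with entries in $[1,\gamma]$ — to choose, for a fixed small $\varepsilon$, a single diagonal similarity $w$ with entries in $[1,\gamma]$ nearly realizing the norm of the relevant element; this is exactly the mechanism exploited in Lemma~\ref{Lemma:isomorphic} above. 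Third, because $\gamma' \ge R(L,d)\gamma$, the diagonal matrix $w$ (entries in $[1,\gamma]$, hence in particular in $[1,\gamma']$ once $R(L,d)\ge 1$) can be ``moved into'' the $M_{d}^{\gamma'}$-tensor factor: one conjugates $\varphi$ by $\sigma^d(w)\otimes 1$ on the target, which is a bounded invertible operator on the first tensor factor with norm and inverse norm controlled by $\gamma'$, and after this conjugation the resulting homomorphism is, on matrix units, within a factor depending only on $L$ and $d$ of a spatial representation landing in $A$. Fourth, apply \cite[Lemma 5.8]{phillips_isomorphism_2013} to replace this nearly-spatial homomorphism by an honest unital homomorphism $\psi\colon M_d \to A$; track the constants to see that one can arrange $\|\psi\|\le L+1$ by choosing $R(L,d)$ large enough at the outset. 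Finally, note that $R(L,d)$ depends only on $L$ and $d$, as asserted, since every estimate above is in terms of these.

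The main obstacle I expect is bookkeeping the constants so that the final bound is exactly $L+1$ (rather than some worse function of $L$): this requires that the ``correction'' step — passing from a homomorphism that is spatial up to a small multiplicative error to an exactly spatial one — costs only an additive $1$ (or, more precisely, a factor $1+\delta$ for $\delta$ as small as we like, which we then absorb), and this in turn is precisely why the hypothesis is $\gamma'\ge R(L,d)\gamma$ with $R(L,d)$ allowed to be as large as needed. One has to check that the perturbation lemma of Phillips is genuinely \emph{quantitative} in this direction, i.e.\ that the closeness to spatiality one can force by taking $\gamma'/\gamma$ large translates into closeness of norms below $L+1$; granting \cite[Lemma 5.8]{phillips_isomorphism_2013} and \cite[Lemma 5.11]{phillips_isomorphism_2013}, this is a routine but slightly delicate estimate, which is exactly why the statement is attributed to Phillips and only its adaptation is claimed here. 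A secondary point to be careful about is that $M_d^\gamma$ is defined via the specific representation $\rho^\gamma$ built from \emph{all} rational diagonal similarities with entries in $[1,\gamma]$ (a direct sum over $k\in\mathbb{N}$), so one should observe that for the purpose of this lemma it suffices to work with one nearly-optimal similarity at a time, which reduces the argument to the single-block case treated in \cite[Section 5]{phillips_isomorphism_2013}.
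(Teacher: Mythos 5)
At the level of what is cited, your plan coincides with the paper's: the authors give no argument at all beyond the remark that the lemma ``can be proved in the same way as \cite[Lemma 5.11]{phillips_isomorphism_2013} with the extra ingredient of \cite[Lemma 5.8]{phillips_isomorphism_2013}.'' The problem is with your reconstruction of what that argument does, specifically your third step. You take a diagonal similarity $w$ with entries in $[1,\gamma]$ arising from the definition of the \emph{domain} norm $\Vert\cdot\Vert_{\gamma}$, and you conjugate the \emph{target} by $\sigma^{d}(w)\otimes 1$, asserting that afterwards $\varphi$ is, on matrix units, close to a representation landing in $1\otimes A$. But $\varphi$ is an arbitrary unital homomorphism $M_{d}^{\gamma}\to M_{d}^{\gamma'}\otimes^{p}A$: there is no a priori relation between the matrix units of the domain and the first tensor factor of the target, so conjugating by a fixed element of $M_{d}^{\gamma'}\otimes 1$ cannot move the image of $\varphi$ into (a neighbourhood of) $1\otimes A$. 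That the image can be pushed into $1\otimes A$ at all is the entire content of the lemma --- note that the analogous statement is simply false for C*-algebras, where $x\mapsto x\otimes 1$ is always a unital embedding of $M_{d}$ into $M_{d}\otimes A$ --- and your outline contains no mechanism that produces it.

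Relatedly, your heuristic for the hypothesis $\gamma'\geq R(L,d)\gamma$, namely that a large ratio gives ``room to absorb'' the domain's similarities into the $M_{d}^{\gamma'}$ factor, points in the wrong direction. The role of a large $\gamma'$ is to make the first tensor factor \emph{rigid}, not roomy: by construction the norm of $M_{d}^{\gamma'}\otimes^{p}A$ dominates the spatial norm of every conjugate $(w'\otimes 1)x(w'^{-1}\otimes 1)$ with $w'$ diagonal with entries in $[1,\gamma']$, and such conjugation multiplies the $(k,\ell)$ component of $x$ relative to the first factor by $w'_{k}/w'_{\ell}$, which ranges up to $\gamma'$. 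Hence an element of norm at most $L$ must have off-diagonal components (with respect to the first factor) of norm on the order of $L/\gamma'$; applied to the system $\varphi(e_{i,j})$, this forces the image of $\varphi$ to nearly commute with $M_{d}^{\gamma'}\otimes 1$ once $\gamma'$ is large compared with $L$ and $d$, and it is this near-commutation that is then corrected --- this is where the quantitative input of Phillips's lemmas enters --- to an honest unital homomorphism $\psi\colon M_{d}^{\gamma}\to A$ with $\Vert\psi\Vert\leq L+1$. Without this rigidity/lower-bound step, which is absent from your sketch, the argument does not go through.
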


Our assumption on the values of $\boldsymbol{d}$ will be used for the first
time in the next lemma, where it is shown that sufficiently different
sequences yield noncommensurable UHF $L^{p}$-operator algebras.

The $K_0$-group of a Banach algebra $A$ is defined using idempotents in
matrices over $A$, and a suitable equivalence relation involving
similarities of such idempotents. We refer the reader to \cite[Chapters 5,8,9%
]{blackadar_K-theory_1998} for the precise definition and some basic
properties. What we will need here is the following:

\begin{remark}
\label{remark:K_0} For $n\in\mathbb{N}$ and a unital Banach algebra $A$, if
there exists a unital, continuous homomorphism $M_n\to A$, then the class of
unit of $A$ in $K_0(A)$ must be divisible by $n$.
\end{remark}

\begin{lemma}
Suppose that $\boldsymbol{\gamma },\boldsymbol{\gamma }^{\prime }\in [
1,+\infty ) ^{\mathbb{N}}$ satisfy $\gamma _{n}^{\prime }\geq R( n,d_{n})
\gamma _{n}$ for infinitely many $n\in \mathbb{N}$. Then there is no
continuous unital homomorphism $\varphi \colon A^{\boldsymbol{\gamma }}\to
A^{\gamma ^{\prime }}$.
\end{lemma}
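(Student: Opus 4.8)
The plan is to argue by contradiction, extracting from a hypothetical continuous unital homomorphism $\varphi\colon A^{\boldsymbol\gamma}\to A^{\boldsymbol\gamma'}$ a contradiction with the indivisibility of the $K_0$-class of the unit, using Remark~\ref{remark:K_0} together with the perturbation lemma of Phillips (Lemma~\ref{Lemma:perturbation}). The key structural fact to exploit is that $A^{\boldsymbol\gamma'}$ splits, for each $n$, as an $L^p$-operator tensor product $M_{d_n}^{\gamma'_n}\otimes^p B_n$, where $B_n=\bigotimes_{m\neq n}^p M_{d_m}^{\gamma'_m}$; moreover $K_0(A^{\boldsymbol\gamma'})$ should be computable (it is the $K_0$ of the analogous UHF C*-algebra, namely $\mathbb{Z}$ with the unit sitting at the supernatural-number position determined by $\boldsymbol d$), so the class $[1]$ in $K_0(A^{\boldsymbol\gamma'})$ is divisible precisely by those integers dividing some finite product of the $d_m$'s.

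First I would fix an $n$ for which $\gamma_n'\geq R(1,d_n)\gamma_n$; since this holds for infinitely many $n$, I may also arrange that the ``tail contribution'' is controlled. Composing $\varphi$ with the canonical unital embedding $M_{d_n}^{\gamma_n}\hookrightarrow A^{\boldsymbol\gamma}$ (the $n$-th tensor leg), I obtain a continuous unital homomorphism $\varphi_n\colon M_{d_n}^{\gamma_n}\to A^{\boldsymbol\gamma'}$, with norm bounded by $\|\varphi\|$. Writing $A^{\boldsymbol\gamma'}=M_{d_n}^{\gamma_n'}\otimes^p B_n$ and applying Lemma~\ref{Lemma:perturbation} with $L=\|\varphi\|$, $d=d_n$, $A=B_n$, I would get a continuous unital homomorphism $\psi_n\colon M_{d_n}^{\gamma_n}\to B_n$. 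Then Remark~\ref{remark:K_0} forces $[1_{B_n}]\in K_0(B_n)$ to be divisible by $d_n$. But $B_n$ is a UHF $L^p$-operator algebra of tensor product type $(d_m)_{m\neq n}$, so $K_0(B_n)\cong\mathbb{Z}$ with $[1_{B_n}]$ corresponding to $1$, and $1$ is divisible by $d_n$ in $\mathbb{Z}$ only if $d_n=1$ — contradiction, provided the $d_n$ are at least $2$ (which we may assume, or argue the coprimality/non-divisibility hypothesis rules out $d_n\mid$ any finite product of the others). More carefully, since each $d_n$ fails to divide any $d_m$, and the $d_m$ are pairwise non-dividing, $d_n$ cannot divide any finite product $d_{m_1}\cdots d_{m_r}$ with $m_i\neq n$ either — this is the number-theoretic point and I would record it as a small sublemma (e.g. if the $d_n$ are pairwise coprime this is immediate).

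The main obstacle I anticipate is the clean identification of $K_0$ of the relevant $L^p$-operator algebras — specifically, justifying that $K_0(B_n)\cong\mathbb{Z}$ with the unit class equal to $1$ (or at least not divisible by $d_n$). For C*-algebras this is Glimm's computation, but here one needs the $L^p$ analog; fortunately Phillips has developed $K$-theory for $L^p$-operator UHF algebras in \cite{phillips_isomorphism_2013}, and the relevant statement (that these algebras have the same $K_0$ as their C*-counterparts, with the same position of the unit) should be available or provable by the standard inductive-limit argument using that finite tensor products $M_{d_1}^{\gamma_1}\otimes^p\cdots\otimes^p M_{d_k}^{\gamma_k}$ are $M_{d_1\cdots d_k}^\delta$ for a suitable $\delta$ and hence have $K_0\cong\mathbb{Z}$ with unit generating. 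A secondary technical point is to confirm that the decomposition $A^{\boldsymbol\gamma'}\cong M_{d_n}^{\gamma_n'}\otimes^p B_n$ is a genuine isometric (or at least completely bounded) identification in the $L^p$-operator category, but this follows from associativity and commutativity of $\otimes^p$ as set up in \cite{phillips_simplicity_2013}. Once these identifications are in hand, the contradiction is immediate and the proof is short.
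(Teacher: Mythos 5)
Your strategy is exactly the paper's: restrict $\varphi$ to the $n$-th tensor leg, use the splitting $A^{\boldsymbol{\gamma}'}\cong M_{d_n}^{\gamma_n'}\otimes^p B_n$, invoke Lemma~\ref{Lemma:perturbation} to compress to a continuous unital homomorphism $M_{d_n}^{\gamma_n}\to B_n$, and contradict divisibility of $[1_{B_n}]$ in $K_0(B_n)$ via Remark~\ref{remark:K_0} and continuity of $K_0$ under direct limits. The one step that fails as written is your choice of $n$: you fix $n$ with $\gamma_n'\geq R(1,d_n)\gamma_n$ but then apply Lemma~\ref{Lemma:perturbation} with $L=\|\varphi\|$, which requires $\gamma_n'\geq R(\|\varphi\|,d_n)\gamma_n$, and nothing relates $R(1,d_n)$ to $R(\|\varphi\|,d_n)$. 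The hypothesis deliberately puts $R(n,d_n)$ (not $R(1,d_n)$) in front of $\gamma_n$ precisely so that, the set of good indices being infinite, you can choose $n\geq\|\varphi\|$ and get $\gamma_n'\geq R(n,d_n)\gamma_n\geq R(\|\varphi\|,d_n)\gamma_n$ (taking $R(\cdot,d)$ nondecreasing in the first variable, as one may). This is the paper's ``pick $n\geq L$'' move; your remark about controlling the ``tail contribution'' does not substitute for it.

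Two smaller points. First, $K_0(B_n)$ is not $\mathbb{Z}$: it is the direct limit $\mathbb{Z}\left[\tfrac{1}{b}: b\text{ divides some }d_m,\ m\neq n\right]\subseteq\mathbb{Q}$ with $[1_{B_n}]=1$, so the contradiction is that $1/d_n$ fails to lie in this subring; your later, corrected formulation (``$d_n$ divides no finite product of the other $d_m$'s'') is the one to use. Second, that arithmetic sublemma does \emph{not} follow from pairwise non-divisibility alone: $6$, $10$, $15$ are pairwise non-dividing, yet $6$ divides $10\cdot 15$ (equivalently, every prime of $6$ divides some other term). What is actually needed is that some prime of $d_n$ divides no $d_m$ with $m\neq n$ --- which is what the paper asserts at the corresponding point, and which is automatic under pairwise coprimality but not under the weaker standing hypothesis. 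Your instinct to isolate this as a sublemma is right; just prove it under a hypothesis that delivers it (or note that it suffices to have it for infinitely many $n$).
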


\begin{proof}
Assume by contradiction that $\varphi \colon A^{\boldsymbol{\gamma }}\to A^{%
\boldsymbol{\gamma }^{\prime }}$ is a continuous unital homomorphism and set 
$L=\|\varphi\| $. Pick $n\in \mathbb{N}$ such that $n\geq L$ and $\gamma
_{n}^{\prime }\geq R( n,d_{n}) \gamma _{n}$. Set 
\begin{equation*}
A= \bigotimes\limits^{p}_{m\in\mathbb{N}, m\neq n}M_{d_{m}}^{\gamma _{m}}.
\end{equation*}
Apply Lemma \ref{Lemma:perturbation} to the unital homomorphism $\varphi
\colon M_{d_{n}}^{\gamma _{n}}\to M_{d_{n}}^{\gamma _{n}}\otimes^{p}A$, to
get a unital homomorphism $\psi \colon M_{d_{n}}^{\gamma _{n}}\to A$ with $%
\| \psi \| \leq L+1$.

Using Remark \ref{remark:K_0}, we conclude that the class of the unit of $A$
in $K_{0}(A)$ is divisible by $d_{n}$. On the other hand, the $K$-theory of $%
A$ is easy to compute using that $K$-theory for Banach algebras commutes
with direct limits (with contractive maps). We get%
\begin{equation*}
K_{0}\left( A\right) =\mathbb{Z}\left[ \frac{1}{b}:b\neq 0\text{ divides }%
d_{m}\text{ for some }m\neq n\right]
\end{equation*}%
%
%
%
%
with the unit of $A$ corresponding to $1\in K_{0}(A)\subseteq \mathbb{Q}$.

Since there is a prime appearing in the factorization of $d_{n}$ that does
not divide any $d_{m}$, for $m\neq n$, we deduce that the class of the unit
of $A$ in $K_{0}(A)$ cannot be divisible by $d_{n}$. This contradiction
shows that there is no continuous unital homomorphism $\varphi \colon A^{%
\boldsymbol{\gamma }}\rightarrow A^{\boldsymbol{\gamma }^{\prime }}$
\end{proof}

We say that a set is \emph{comeager} if its complement is meager. Observe
that, by definition, a nonmeager set interescts every comeager set. Recall
that we regard $[1,+\infty )^{\mathbb{N}}$ as the parametrizing space of the
UHF $L^{p}$-operator algebras of tensor product type $\boldsymbol{d}$
obtained from a diagonal system of similarities. Consistently, we regard
(complete) isomorphism and commensurability of such algebras as equivalence
relations on $[1,+\infty )^{\mathbb{N}}$.

\begin{proof}[Proof of Theorem \protect\ref{Theorem:main}]
By \cite[Theorem 3.19(3)]{phillips_simplicity_2013}, every UHF $L^{p}$%
-operator algebra of tensor product type $\boldsymbol{d}$ obtained from a
diagonal system of similarities is simple and monotracial. Therefore, it is
enough to prove the second assertion of Theorem \ref{Theorem:main}. For $%
\boldsymbol{t}\in \lbrack 0,+\infty )^{\mathbb{N}}$, define $\exp (%
\boldsymbol{t})$ to be the sequence $(\exp (t_{n}))_{n\in \mathbb{N}}$ of
real numbers in $[1,\infty )$. By Corollary \ref{Corollary:isomorphic}, if $%
\boldsymbol{t},\boldsymbol{t}^{\prime }\in \lbrack 0,+\infty )^{\mathbb{N}}$
satisfy $\boldsymbol{t}\mathbf{-}\boldsymbol{t}^{\prime }\in \ell ^{1}$,
then $A^{\exp (\boldsymbol{t})}$ and $A^{\exp (\boldsymbol{t}^{\prime })}$
are completely isomorphic. We claim that for any nonmeager subset $C$ of $%
\left[ 0,+\infty \right) ^{\mathbb{N}}$ one can find $\boldsymbol{t},%
\boldsymbol{t}^{\prime }\in C$ such that $A^{\exp (\boldsymbol{t})}$ and $%
A^{\exp (\boldsymbol{t}^{\prime })}$ are not commensurable. This fact
together with Corollary \ref{Corollary:isomorphic} will show that the Borel
function%
\begin{align*}
\lbrack 0,+\infty )^{\mathbb{N}}& \rightarrow \lbrack 1,+\infty )^{\mathbb{N}%
} \\
\boldsymbol{t}& \mapsto \exp (\boldsymbol{t})
\end{align*}%
satisfies the hypotheses of Criterion \ref{Criterion:nonclassification} for
any of the equivalence relations $E$ in the statement of Theorem \ref%
{Theorem:main}, yielding the desired conclusion.

Let then $C$ be a nonmeager subset of $[0,+\infty )^{\mathbb{N}}$, and fix $%
\boldsymbol{t}\in C$. We want to find $\boldsymbol{t}^{\prime }\in C$ such
that $A^{\exp (\boldsymbol{t})}$ and $A^{\exp (\boldsymbol{t}^{\prime })}$
are not commensurable. The set

\begin{align*}
& \left\{ \boldsymbol{t}^{\prime }\in \lbrack 0,+\infty )^{\mathbb{N}}\colon 
\mbox{
for all but finitely many }n\in \mathbb{N}\text{, }\exp (t_{n}^{\prime
})\leq R(n,d_{n})\exp (t_{n})\right\} \\
& \ \ \ \ \ \ \ \ \ \ =\bigcup_{k\in \mathbb{N}}\left\{ \boldsymbol{t}%
^{\prime }\in \lbrack 0,+\infty )^{\mathbb{N}}\colon {}\forall n\geq k\text{%
, }\exp (t_{n}^{\prime })\leq R(n,d_{n})\exp (t_{n})\right\}
\end{align*}%
is a countable union of closed nowhere dense sets, hence meager. Therefore,
its complement

\begin{equation*}
\left\{ \boldsymbol{t}^{\prime }\in \lbrack 0,+\infty )^{\mathbb{N}}\colon %
\mbox{ for infinitely many }n\in \mathbb{N}\text{, }\exp (t_{n}^{\prime
})>R(n,d_{n})\exp (t_{n})\right\} \text{,}
\end{equation*}%
is comeager. In particular, since $C$ is nonmeager, there is $\boldsymbol{t}%
^{\prime }\in C$ such that $\exp (t_{n}^{\prime })\geq R(n,d_{n})\exp
(t_{n}) $ for infinitely many $n\in \mathbb{N}$. By Lemma \ref%
{Lemma:perturbation}, there is no continuous unital homomorphism from $%
A^{\exp (\boldsymbol{t})}$ to $A^{\exp (\boldsymbol{t}^{\prime })}$.
Therefore $A^{\exp (\boldsymbol{t})}$ and $A^{\exp (\boldsymbol{t}^{\prime
})}$ are not commensurable. This concludes the proof of the above claim.
\end{proof}

\bibliographystyle{amsplain}
\bibliography{UHF-library2}

\end{document}